\newtheorem{Theorem}{Theorem}[section]
\newtheorem{Lemma}[Theorem]{Lemma}
\newtheorem{Proposition}[Theorem]{Proposition}
\newtheorem{Remark}[Theorem]{Remark}
\newtheorem{Example}[Theorem]{Example}
\newcommand{\bR}{\ensuremath{\mathbb R}}
\newcommand{\bS}{\ensuremath{\mathbb S}}
\newcommand{\bX}{\ensuremath{\mathbb{X}}}
\newcommand{\bY}{\ensuremath{\mathbb{Y}}}
\newcommand{\bW}{\ensuremath{\mathbb{W}}}
\newcommand{\bZ}{\ensuremath{\mathbb{Z}}}
\newcommand{\bE}{\ensuremath{\mathbb{E}}}
\newcommand{\cC}{\ensuremath{\mathcal{C}}}
\newcommand{\cE}{\ensuremath{\mathcal{E}}}
\newcommand{\cF}{\ensuremath{\mathcal{F}}}
\newcommand{\cM}{\ensuremath{\mathcal{M}}}
\newcommand{\cX}{\ensuremath{\mathcal{X}}}
\newcommand\sfEq{\ensuremath{\mathsf{Eq}}}
\newcommand\EffEq{\ensuremath{\mathsf{EffEq}}}
\newcommand\Gpd{\ensuremath{\mathsf{Gpd}}}
\newcommand\Set{\ensuremath{\mathsf{Set}}}
\newcommand\Simpl{\ensuremath{\mathsf{Simpl}}}
\newcommand\Ab{\ensuremath{\mathsf{Ab}}}
\DeclareMathOperator{\Supp}{Supp}
\DeclareMathOperator{\Dec}{Dec}
\DeclareMathOperator{\Eq}{Eq}
\begin{document}

\title{A relative monotone-light factorization system for internal groupoids}
\author{Alan S. Cigoli, Tomas Everaert and Marino Gran}

\maketitle

\begin{center}
\textit{Dedicated to Robert Lowen on the occasion of his seventieth birthday}
\end{center}

\begin{abstract}
Given an exact category $\mathcal C$, it is well known that the connected component reflector $ \pi_0 \colon \Gpd(\cC) \rightarrow \cC$ from the category $\Gpd(\cC)$ of internal groupoids in $\cC$ to the base category $\cC$ is semi-left-exact. In this article we investigate the existence of a monotone-light factorization system associated with this reflector. We show that, in general, there is no monotone-light factorization system $(\cE',\cM^*)$ in \Gpd(\cC), where $\cM^*$ is the class of coverings in the sense of the corresponding Galois theory. However, when restricting to the case where $\cC$ is an exact Mal'tsev category, we show that the so-called comprehensive factorization of regular epimorphisms in \Gpd(\cC) is the relative monotone-light factorization system (in the sense of Chikhladze) in the category \Gpd(\cC) corresponding to the connected component reflector, where $\cE'$ is the class of final functors and $ \cM^*$ the class of regular epimorphic discrete fibrations.
\end{abstract}

\section*{Introduction} \label{sec:factsyst}

Every full reflective subcategory \cX\ of a finitely complete category \cC
\begin{equation} \label{diag:adj}
\xymatrix{
\cC \ar@<1.2ex>[r]^I_\bot & \cX \ar@<1.2ex>[l]^H
}
\end{equation}
gives rise to a prefactorization system $(\cE,\cM)$ where \cE\ is the class of arrows inverted by the reflector $I$. This type of prefactorization systems are extensively studied in \cite{CHK}, where conditions are given for $(\cE,\cM)$ to be a factorization system. This is always the case when $I$ is \emph{semi-left-exact}, i.e.\ when it preserves the pullbacks of arrows in \cM\ along any arrow. In this case, the factorization $f=me$ of an arrow $f$ is displayed by the diagram
\[
\xymatrix{
A \ar[dr]^e \ar@/^2ex/[drr]^{\eta_A} \ar@/_2ex/[ddr]_f \\
& B\times_{HI(B)}HI(A) \ar[r] \ar[d]^m & HI(A) \ar[d]^{HI(f)} \\
& B \ar[r]_-{\eta_B} & HI(B)
}
\]
where $m$ and the unlabelled arrow are the pullback projections, $\eta_A$ and $\eta_B$ are components of the unit of the adjunction, and the arrow $e$ is induced by the universal property of the pullback. So, in this case, the class \cM\ is given by those morphisms $m$ in \cC\ such that the diagram
\begin{equation} \label{diag:triv.cov}
\begin{aligned}
\xymatrix{
A \ar[r]^-{\eta_A} \ar[d]_m & HI(A) \ar[d]^{HI(m)} \\
B \ar[r]_-{\eta_B} & HI(B)
}
\end{aligned}
\end{equation}
is a pullback.

As explained in \cite{CJKP}, a reflection $I$ is semi-left-exact precisely when the adjunction (\ref{diag:adj}) is \emph{admissible} in the sense of Categorical Galois Theory \cite{J89,J90}. In this case, the morphisms in the class \cM\ above are exactly the \emph{trivial coverings} of the corresponding Galois Theory, whereas the class $\cM^*$ of \emph{coverings} is obtained via a localization process. More precisely, a morphism $f$ is in $\cM^*$ if there exists some effective descent morphism $p$ such that the pullback of $f$ along $p$ lies in \cM. As shown in \cite{CJKP}, there are interesting situations in topology and algebra where the class $\cM^*$ is part of a factorization system $(\cE',\cM^*)$, where $\cE'$ is the class of those arrows which are stably in \cE, i.e.\ whose pullbacks are always in \cE. This process of simultaneously ``stabilizing'' and ``localizing'' the pair $(\cE,\cM)$, first considered in \cite{CJKP}, does not give a factorization system, in general. When this is the case, one says that $(\cE',\cM^*)$ is the \emph{monotone-light} factorization system associated with $(\cE,\cM)$. This kind of factorization systems and their relationship with torsion theories in the context of normal categories are widely studied in \cite{EG13}.
\medskip

Every exact category \cC\ can be seen as a reflective subcategory of the category $\Gpd(\cC)$ of its (internal) groupoids,
\begin{equation} \label{adj:Gpd-C}
\xymatrix{
\Gpd(\cC) \ar@<1.2ex>[r]^-{\pi_0}_-\bot & \cC \ar@<1.2ex>[l]^-D
}
\end{equation}
where $D$ sends any object in \cC\ to the corresponding discrete groupoid, and $\pi_0$ is the connected component functor, which is a semi-left-exact reflector (see \cite{B87}). It turns out that the class $\cM^*$ for the corresponding Galois structure is given by (internal) discrete fibrations (see Proposition \ref{prop:df}), and since the latter are part of the well known \emph{comprehensive} factorization system \cite{SW, B87}, it is natural to ask if this is actually the monotone-light factorization system associated with the adjunction above.

Our counterexample \ref{final.not.pbs} shows that the answer to this question is negative, in general. However, as we explain in Section \ref{sec:ex.mal}, when \cC\ is an exact Mal'tsev category, one can see the comprehensive factorization  for regular epimorphisms in $\Gpd(\cC)$ as a \emph{relative} monotone-light factorization system (in the sense of \cite{Ch04}). This result relies in particular on the fact that any internal groupoid \bX\ admits a (relative) stabilising object \cite{CJKP} that is the image $\Dec(\bX)$ of \bX\ under the \emph{d\'ecalage} functor recalled in Section \ref{sec:conn.comp}.

\section{Galois structures and relative monotone-light factorization}

A Galois structure $(\cC,\cX,I,H,\eta,\epsilon,\cF,\Phi)$ (in the sense of \cite{J89,J90}) is a system satisfying the following properties:
\begin{enumerate}
\item $I \dashv H$ is an adjunction \eqref{diag:adj} with unit $\eta\colon 1_\cC\to HI$ and counit $\epsilon\colon IH \to 1_\cX$;
\item $\cF$ and $\Phi$ are subclasses of arrows in $\cC$ and in $\cX$, respectively, such that 
  \begin{enumerate}
  \item[(i)] $I(\cF)\subseteq\Phi$ and $H(\Phi)\subseteq\cF$;
  \item[(ii)] \cC\ admits pullbacks along arrows in \cF, and \cF\ is pullback stable, \\ \cX\ admits pullbacks along arrows in $\Phi$, and $\Phi$ is pullback stable;
  \item[(iii)] \cF\ and $\Phi$ contain all isomorphisms and are closed under composition.
  \end{enumerate}
\end{enumerate}
For each object $B$ in \cC, we denote by $\cF(B)$ the full subcategory of the ``slice category'' $\cC\downarrow B$ whose objects are in the class \cF, and similarly $\Phi(I(B))$ will denote the full subcategory of $\cX\downarrow I(B)$ whose objects are in $\Phi$. Then there is an induced adjunction
\[
\xymatrix{
\cF(B) \ar@<1.2ex>[r]^-{I^B}_-\bot & \Phi(I(B)) \ar@<1.2ex>[l]^-{H^B}
},
\qquad \eta^B\colon 1_{\cF(B)}\to H^BI^B,
\quad \epsilon^B\colon I^BH^B\to 1_{\Phi(I(B))},
\]
where $I^B$ is defined by the image under $I$, and $H^B$ by the pullback along $\eta_B$ of the image under $H$.

The object $B$ is said to be admissible if $\epsilon^B$ is an isomorphism. We shall say that the Galois structure $(\cC,\cX,I,H,\eta,\epsilon,\cF,\Phi)$ is \emph{admissible} if each $B$ in ${\cC}$ is admissible.

A morphism $p\colon E\to B$ in \cC\ is said to be a \emph{monadic extension} if the pullback functor $p^*\colon \cF(B)\to\cF(E)$ is monadic. An object $f\colon A\to B$ in $\cF(B)$ is said to be a \emph{trivial covering} when the diagram
\[
\xymatrix{
A \ar[d]_{f} \ar[r]^-{\eta_A} & HI(A) \ar[d]^{HI(f)} \\
B \ar[r]_-{\eta_B} & HI(B)
}
\]
is a pullback. An object $f\colon A\to B$ in $\cF(B)$ is said to be a \emph{covering} if there exists a monadic extension $p$ such that $p^*(A,f)$ is a trivial covering. One can also express this property by saying that ``$(A,f)$ is split by $p$''.

In the present work we are interested in those Galois structures where, moreover, $H,I$ present \cX\ as a full reflective subcategory of \cC \, or, equivalently, where $\epsilon\colon IH \to 1_\cX$ is an isomorphism. An important consequence of this assumption is that admissibility amounts then to asking that the functor $I$ preserves pullbacks of the form
\[
\xymatrix{
B \times_{HI(B)} H(X) \ar[r] \ar[d] & H(X) \ar[d]^{H(\phi)} \\
B \ar[r]_-{\eta_B} & HI(B)
}
\]
with $\phi$ in $\Phi$. This allows one to view the trivial coverings of our Galois structure as part of a relative factorization system for the class \cF, as explained below.
\medskip

In \cite{Ch04}, the author proposes a notion of factorization system relative to a given subclass of arrows. Namely, given a category \cC\ and a class  \cF\ of arrows in \cC\ containing identities, closed under composition, and pullback stable, a factorization system for \cF\ is a pair of classes of maps $(\cE,\cM)$ such that:
\begin{enumerate}
\item \cE\ and \cM\ both contain identities and are closed under composition with isomorphisms;
\item \cE\ and \cM\ are orthogonal to each other: for any commutative square in \cC
\[
\xymatrix{
\ar[r]^e \ar[d]_f & \ar[d]^g \ar@{-->}[dl]^d \\
\ar[r]_m &
}
\]
with $e\in\cE$, $m\in \cM$ there exists a unique $d$ such that $de=f$ and $md=g$;
\item \cM\ is contained in \cF;
\item every arrow $f$ in \cF\ is the composite $f=me$ of an $m$ in \cM\ with an $e$ in \cE. 
\end{enumerate}

Let $(\cC,\cX,I,H,\eta,\epsilon,\cF,\Phi)$ be a Galois structure in which $\epsilon\colon IH \to 1_\cX$ is an isomorphism and \cC\ is finitely complete. We shall denote by \cE\ the class of arrows in \cC\ which are inverted by $I$, and by \cM\ the class of trivial coverings (which are now those arrows $m$ in \cF\ such that diagram (\ref{diag:triv.cov}) is a pullback).

It is explained in \cite{Ch04} that, as it happens for all maps in the absolute case, the pair $(\cE,\cM)$ forms a factorization system for the subclass \cF. Then, again, one can consider the class $\cM^*$ of morphisms which are locally in \cM, i.e.\ the coverings of our Galois structure, and ask whether $(\cE',\cM^*)$ is a factorization system for \cF, where $\cE'$, as above, is the class of morphisms which are stably in \cE. A morphism is said to be stably in \cE\ if any pullback of it along any arrow in \cF\ is in \cE. In this case one says that this is a \emph{relative monotone-light factorization system for \cF.}

\section{Internal groupoids and the connected component functor} \label{sec:conn.comp}

Given an internal groupoid \bX\ in an exact category \cC, we write
\[
\xymatrix{
X_1 \ar@<1ex>[r]^{c} \ar@<-1ex>[r]_{d} & X_0 \ar[l]|e 
}
\]
for its underlying reflexive graph. Then the image $\pi_0(\bX)$ of $\bX$ through the reflection (\ref{adj:Gpd-C}) is defined as the codomain of the coequalizer of $(d,c)$.

We can look at the semi-left-exact reflection (\ref{adj:Gpd-C}) as the composite of two adjunctions:
\begin{equation} \label{diag:double.adj}
\begin{aligned}
\xymatrix{
\Gpd(\cC) \ar@<1.2ex>[r]^-{\Supp}_-\bot & \sfEq(\cC) \ar@<1.2ex>[r]^-{Q}_-\bot \ar@<1.2ex>[l]^-U & \cC \ar@<1.2ex>[l]^-{D'}
}
\end{aligned}
\end{equation}
where $\sfEq(\cC)$ is the full subcategory of $\Gpd(\cC)$ whose objects are internal equivalence relations, $U$ the inclusion functor (that we will drop where confusion in the notation is unlikely) and $\Supp$ its left adjoint sending every internal groupoid to the kernel pair of the coequalizer of $(d,c)$. $D'$ denotes the unique factorization of $D$ through $\sfEq(\cC)$. A functor $F=(f_0,f_1)\colon\bX\to\bY$ between internal groupoids can be conveniently displayed as
\begin{equation} \label{diag:functor}
\begin{aligned}
\xymatrix{
X_1 \ar[r]^{f_1} \ar@<1ex>[d]^c \ar@<-1ex>[d]_d & Y_1 \ar@<1ex>[d]^c \ar@<-1ex>[d]_d \\
X_0 \ar[u]|e \ar[r]_{f_0} & Y_0 \ar[u]|e
}
\end{aligned}
\end{equation}
For each internal groupoid \bX, we will denote the unit of the adjunction $\Supp\dashv U$ as follows:
\[
\xymatrix{
X_1 \ar[r]^-{\sigma_X} \ar@<1ex>[d]^c \ar@<-1ex>[d]_d & \Sigma X_1 \ar@<1ex>[d]^{r_2} \ar@<-1ex>[d]_{r_1} \\
X_0 \ar[u]|e \ar[r]_{1_{X_0}} & X_0 \ar[u]|{s_0}
}
\]
A functor $F$ is said to be a(n internal) \emph{discrete fibration} if the square $cf_1=f_0c$ in (\ref{diag:functor}) is a pullback.

Internal functors in the class \cM\ are also called ``$\pi_0$-cartesian'' by Bourn and the next result, which will be useful later on, is a reformulation of his observation on page 217 in \cite{B87}.

\begin{Lemma} \label{lem:M}
A functor $F$ between internal groupoids is in the class \cM\ if and only if both $F$ and $\Supp F$ are discrete fibrations. 
\end{Lemma}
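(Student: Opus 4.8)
The plan is to characterize the two conditions separately and then observe they coincide. Recall that $F$ is in $\cM$ precisely when the naturality square \eqref{diag:triv.cov} for the unit $\eta$ of $\pi_0\dashv D$ is a pullback in $\Gpd(\cC)$; since $\pi_0 = Q\circ\Supp$ factors through $\sfEq(\cC)$, and since $D$ lands in discrete groupoids, a pullback in $\Gpd(\cC)$ is computed componentwise, so this amounts to pullback squares at the level of both objects and arrows of the internal groupoids. I would first unwind what the unit $\eta_{\bX}\colon \bX\to DI(\bX)$ looks like: on objects it is the coequalizer $q\colon X_0\to\pi_0(\bX)$ of $(d,c)$, and on arrows it is $q\circ d = q\circ c$. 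The square at the level of objects expresses that the obvious square involving $f_0$, the coequalizers, and $I(F)$ is a pullback; the square at the level of arrows, given the discrete structure on the codomain, reduces (after a short diagram chase using that $e$ is a split mono) to the same pullback condition together with $F$ being a discrete fibration.

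Next I would make precise the role of $\Supp$. Factoring $\eta_{\bX}$ through the unit $\sigma_{\bX}\colon \bX\to U\Supp\bX$ of $\Supp\dashv U$ followed by the unit of $Q\dashv D'$ applied to $\Supp\bX$, one sees that $F\in\cM$ if and only if \emph{both} the naturality square for $\sigma$ is a pullback \emph{and} the naturality square for the unit of $Q\dashv D'$ at $\Supp F$ is a pullback. The first condition says exactly that $F$ is $\Supp$-cartesian in an appropriate sense; the key point (this is the reformulation of Bourn's observation from Lemma~\ref{lem:M}, which I may simply invoke) is that it is equivalent to $F$ being a discrete fibration. The second condition, applied in $\sfEq(\cC)$ where equivalence relations are effective because $\cC$ is exact, is well-known to be equivalent to $\Supp F$ being a discrete fibration — indeed, for equivalence relations, being $\pi_0$-cartesian over the quotient is precisely the discrete fibration property, since the square $cg_1 = g_0 c$ for $\Supp F$ being a pullback is what makes the relation on $(\Supp\bX)_0$ the pullback of the relation on $(\Supp\bY)_0$.

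So the proof splits as: (1) $F\in\cM$ $\iff$ diagram \eqref{diag:triv.cov} is a pullback $\iff$ (componentwise) the object-square and arrow-square are pullbacks; (2) the object-square being a pullback together with the arrow-square being a pullback is equivalent to ``$F$ is a discrete fibration and $\Supp F$ is a discrete fibration'', via the two-step factorization of $\eta$ through $\sigma$ and the reflection into $\cC$. The main obstacle I anticipate is step (2), specifically showing that the condition on the $\sfEq(\cC)$-level square for $\Supp F$ is exactly that $\Supp F$ is a discrete fibration: one must carefully use exactness of $\cC$ (so that $\Supp\bX$ really is the kernel pair of $q$, and kernel pairs pull back correctly) and the fact that for an equivalence relation the "arrows" object is determined by the "objects" object, so the discrete fibration square and the $\pi_0$-cartesianness square carry the same information. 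Once that identification is in hand, combining it with the already-known characterization of discrete fibrations for $F$ itself (Lemma~\ref{lem:M} as stated, or its proof sketch) finishes the argument. I would also remark that no Mal'tsev hypothesis is needed here — plain exactness of $\cC$ suffices, since $\Supp$ is defined via coequalizers and kernel pairs which exist and are well-behaved in any exact category.
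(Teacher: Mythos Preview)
The paper does not supply a proof of this lemma at all: it merely records that the statement is a reformulation of Bourn's observation on page~217 of \cite{B87}. So there is nothing in the paper to compare your argument against directly.

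Your sketch contains a genuine circularity and a mis-identification. When you assert that the $\sigma$-naturality square (for the unit of $\Supp\dashv U$) being a pullback ``is equivalent to $F$ being a discrete fibration'' and justify this by invoking ``Bourn's observation from Lemma~\ref{lem:M}'', you are citing the very lemma you are proving. Moreover the claim is wrong as stated: on arrows the $\sigma$-square says $X_1\cong Y_1\times_{\Sigma Y_1}\Sigma X_1$, which is \emph{not} the discrete-fibration condition $X_1\cong Y_1\times_{Y_0}X_0$; these involve different pullbacks. The two-step factorisation through $\sigma$ is therefore a detour that does not do what you want. The clean route---which your final paragraph nearly reaches---is to bypass $\sigma$ entirely: since $D\pi_0(\bX)$ is discrete, the $\eta$-square is a pullback in $\Gpd(\cC)$ iff its object- and arrow-components are pullbacks in $\cC$. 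The object-component $X_0\cong Y_0\times_{\pi_0(\bY)}\pi_0(\bX)$ is, by a Barr--Kock argument using exactness of $\cC$ (the $\Sigma$'s are kernel pairs of the coequalisers), equivalent to $\Supp F$ being a discrete fibration; granted that, the arrow-component rewrites as $X_1\cong Y_1\times_{Y_0}X_0$, i.e.\ $F$ is a discrete fibration. That is presumably Bourn's argument, and it is what you should write down instead of the $\sigma$-factorisation.
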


We now recall the definition and some useful properties of the \emph{shift} or \emph{d\'ecalage} functor, first introduced in \cite{Ill}. It is defined on simplicial objects, in general, although here we are interested in its restriction to groupoids, as presented in \cite{B87}.

Given an internal groupoid \bX\ in \cC, the nerve functor sends it to a simplicial object in \cC
\[
\ldots\quad
\xymatrix{
X_3 \ar@<2.1ex>[r]^{d_0} \ar@<.7ex>[r]|{d_1} \ar@<-.7ex>[r]|{d_2} \ar@<-2.1ex>[r]_{d_3} & X_2 \ar@<1.4ex>[r]^{p_1} \ar[r]|{m} \ar@<-1.4ex>[r]_{p_2} & X_1 \ar@<1ex>[r]^c \ar@<-1ex>[r]_d & X_0 \ar[l]|e
}
\]
where $X_n$ for $n\geq 2$ is suitably defined as the object of composable sequences of arrows of length $n$ and face and degeneracy maps can be constructed by means of the structural morphisms of the groupoid. So, for example $X_2=X_1\times_{(d,c)}X_1$ and $m$ is the composition arrow, while $X_3=X_2\times_{(p_2,p_1)}X_2$ and the equation $md_1=md_2$ represents the associativity property of $m$. With a little abuse of notation, we will denote by \bX\ either the groupoid or its nerve.

The \emph{d\'ecalage} $\Dec(\bX)$ of \bX\ is the simplicial object obtained by shifting the $X_i$'s and dropping the last face and degeneracy maps at each level. This induces in a canonical way a morphism of simplicial objects: $\epsilon(\bX)\colon\Dec(\bX)\to\bX$
\[
\xymatrix@!=6ex{
\Dec(\bX) \ar[d]_{\epsilon(\bX)} & \qquad\qquad\ldots & X_4 \ar@<2.1ex>[r]^-{d_0} \ar@<.7ex>[r]|-{d_1} \ar@<-.7ex>[r]|-{d_2} \ar@<-2.1ex>[r]_-{d_3} \ar[d]_{d_4} & X_3 \ar@<1.4ex>[r]^{d_0} \ar[r]|{d_1} \ar@<-1.4ex>[r]_{d_2} \ar[d]_{d_3} & X_2 \ar@<1ex>[r]^{p_1} \ar@<-1ex>[r]_{m} \ar[d]_{p_2} & X_1 \ar[l]|{(1,ed)} \ar[d]^{d} \\
\bX & \qquad\qquad\ldots & X_3 \ar@<2.1ex>[r]^-{d_0} \ar@<.7ex>[r]|-{d_1} \ar@<-.7ex>[r]|-{d_2} \ar@<-2.1ex>[r]_-{d_3} & X_2 \ar@<1.4ex>[r]^{p_1} \ar[r]|{m} \ar@<-1.4ex>[r]_{p_2} & X_1 \ar@<1ex>[r]^c \ar@<-1ex>[r]_d & X_0 \ar[l]|e
}
\]
In fact, $\Dec(-)$ defines in the obvious way an endofunctor on the category of simplicial objects in \cC, which restricts to $\Gpd(\cC)$, and $(d,p_2)$ are the components of an internal functor that we also denote by $\epsilon(\bX)\colon\Dec(\bX)\to\bX$. Notice that, for an internal groupoid \bX, $\Dec(\bX)$ is an internal equivalence relation, since $X_2=X_1\times_{(d,c)}X_1$ and its projections $p_1$ and $m$ on $X_1$ form a kernel pair of $c$.

In what follows, it is also convenient to regard $\Gpd(\cC)$ as a full subcategory of the category $\Simpl_{3,i}(\cC)$ of 3-truncated simplicial objects in \cC, endowed with an ``inversion of arrows'' morphism $i\colon X_1\to X_1$, satisfying the usual equations. The embedding is obtained by sending each groupoid to the truncation of its nerve at the object of composable triples of arrows. $\Simpl_{3,i}(\cC)$ being a functor category, effective descent morphisms in this category are the same as level-wise effective descent morphisms in \cC. This observation has the following fruitful consequence. For the reader's convenience we provide a proof of it, although it is actually a special case of Proposition 3.2.4 in \cite{LeC}. 

\begin{Proposition} \label{prop:dec.desc}
The morphism $\epsilon(\bX)\colon\Dec(\bX)\to\bX$ is an effective descent morphism in $\Gpd(\cC)$.
\end{Proposition}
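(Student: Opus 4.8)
The plan is to prove the proposition by lifting $\epsilon(\bX)$ to the ambient functor category $\Simpl_{3,i}(\cC)$, where effective descent is easy to verify, and then transferring the conclusion back down to $\Gpd(\cC)$.

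First I would observe that, viewed in $\Simpl_{3,i}(\cC)$, the morphism $\epsilon(\bX)$ is in each degree a split epimorphism in \cC: its component in degree $n$ is the face operator $d_{n+1}\colon X_{n+1}\to X_n$ of the nerve of \bX\ (so $d$, $p_2$, $d_3$, $d_4$ in degrees $0,1,2,3$), of which the degeneracy $s_n$ is a section by the simplicial identities. Since \cC\ is exact, split epimorphisms there are (pullback-stable) regular epimorphisms, hence effective descent morphisms; and since $\Simpl_{3,i}(\cC)$ is a functor category, its effective descent morphisms are exactly the level-wise ones, as recalled above. Thus $\epsilon(\bX)$ is an effective descent morphism in $\Simpl_{3,i}(\cC)$. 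Note that $\epsilon(\bX)$ is \emph{not}, in general, a split epimorphism in $\Gpd(\cC)$ itself — the sections $s_n$ do not assemble into an internal functor — which is precisely why we route the argument through $\Simpl_{3,i}(\cC)$.

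Next I would transfer this, using that $\Gpd(\cC)$ is a full subcategory of $\Simpl_{3,i}(\cC)$ closed under finite limits (a limit of internal groupoids, computed level-wise, is again an internal groupoid), so the inclusion preserves and reflects pullbacks. It follows that $\Gpd(\cC)\downarrow\bX$ is a full subcategory of $\Simpl_{3,i}(\cC)\downarrow\bX$, that the category of descent data along $\epsilon(\bX)$ relative to $\Gpd(\cC)$ is a full subcategory of the one relative to $\Simpl_{3,i}(\cC)$, and that the comparison functor for $\epsilon(\bX)$ in $\Gpd(\cC)$ is the restriction of the comparison functor in $\Simpl_{3,i}(\cC)$. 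The latter being an equivalence by the first step, the former is automatically fully faithful, so the whole question reduces to essential surjectivity: given descent data carried by an internal groupoid $C$ over $\Dec(\bX)$, the first step produces $f\colon Z\to\bX$ in $\Simpl_{3,i}(\cC)$ with $\Dec(\bX)\times_{\bX}Z$, together with its canonical descent data, isomorphic to it; in particular $\Dec(\bX)\times_{\bX}Z$ is (isomorphic to) an internal groupoid, and it remains to check that $Z$ is one too.

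This last point — that being an internal groupoid descends along $\epsilon(\bX)$ — is the main obstacle, and the step I expect to require the most care. Inside $\Simpl_{3,i}(\cC)$, the objects arising from internal groupoids are precisely those satisfying the Segal conditions, i.e.\ those for which certain canonical morphisms between finite limits of the components are isomorphisms. I would verify these for $Z$ by comparison with $\Dec(\bX)\times_{\bX}Z$: since both $\Dec(\bX)$ and \bX\ are internal groupoids, a direct computation identifies the Segal morphisms of $\Dec(\bX)\times_{\bX}Z$, up to canonical isomorphism, with base changes of the Segal morphisms of $Z$ along morphisms built from the level-wise split (hence pullback-stable regular) epimorphisms underlying $\epsilon(\bX)$; such base-change morphisms are again regular epimorphisms in \cC. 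As pullback along a regular epimorphism is conservative in the exact category \cC, and the Segal morphisms of $\Dec(\bX)\times_{\bX}Z$ are isomorphisms, those of $Z$ are isomorphisms as well, so $Z\in\Gpd(\cC)$. This supplies the missing essential surjectivity, hence the proposition. Once the reduction to $\Simpl_{3,i}(\cC)$ is in place, this descent of the groupoid conditions is exactly an instance of Proposition 3.2.4 in \cite{LeC}, which one could also simply quote.
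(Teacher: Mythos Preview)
Your proposal is correct and follows essentially the same route as the paper: both lift $\epsilon(\bX)$ to $\Simpl_{3,i}(\cC)$, observe it is level-wise split epi hence effective for descent there, and then reduce to showing that if $\Dec(\bX)\times_{\bX}Z$ is a groupoid so is $Z$, which is verified via conservativity of pullback along the split epimorphism $d$. The only cosmetic difference is that the paper invokes Corollary~3.9 of \cite{JST} for the reduction step you spell out by hand, and makes the base-change cube explicit rather than speaking of Segal morphisms; the reference to \cite{LeC} you mention at the end is exactly what the paper cites before giving its proof.
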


\begin{proof}
First of all observe that all the components of $\epsilon(\bX)$, regarded as a morphism in $\Simpl_{3,i}(\cC)$, are split epimorphisms, hence effective descent morphisms in \cC. In other words, $\epsilon(\bX)$ is effective for descent in $\Simpl_{3,i}(\cC)$. Now, by Corollary 3.9 in \cite{JST}, the thesis follows provided we show that for every pullback in $\Simpl_{3,i}(\cC)$ of the form
\[
\xymatrix{
\bW \ar[r] \ar[d] & \bZ \ar[d] \\
\Dec(\bX) \ar[r]_-{\epsilon(\bX)} & \bX
}
\]
if \bW\ is a groupoid, then \bZ\ is also a groupoid.

First, we have to prove that $Z_2\cong Z_1 \times_{(d_1,d_0)} Z_1$. Consider the following commutative cube:
\[
\xymatrix@=4ex{
W_2 \ar[rr] \ar[dd]_{d_0} \ar[dr]_{d_2} & & Z_2 \ar[dd]_(.3){d_0} \ar[dr]^{d_2} \\
& W_1 \ar[rr] \ar[dd]_(.3){d_0} & & Z_1 \ar[dd]^{d_0} \\
W_1 \ar[dr]_{d_1} \ar[rr] & & Z_1 \ar[dr]_(.35){d_1} \\
& W_0 \ar[rr] & & Z_0
}
\]
By construction, we may interpret the left hand side face as the image of the right hand side face through the change-of-base functor $d^*\colon\cC\downarrow X_0 \to \cC\downarrow X_1$. But since $d$ is a split epimorphism, $d^*$ is conservative, hence it reflects finite limits and the right hand side square is a pullback because so is the left hand side square. A similar argument can be used to prove that $Z_3\cong Z_2\times_{(d_2,d_0)}Z_2$ and this suffices to show that \bZ\ is an internal groupoid in \cC.
\end{proof}

We are now ready to describe the class $\cM^*$ of covering morphisms with respect to the absolute Galois structure induced by the adjunction (\ref{adj:Gpd-C}). It turns out that, also in this general context, they are characterized as in Theorem 3.2 of \cite{G01}. It is worth recalling that the covering morphisms for the case $\cC=\Set$ are well known to coincide with the classically defined covering maps between ordinary groupoids (described in \cite{GZ,Higg,Brown}, for example).

\begin{Proposition} \label{prop:df}
A morphism in $\Gpd(\cC)$ is in the class $\cM^*$ if and only if it is an internal discrete fibration.
\end{Proposition}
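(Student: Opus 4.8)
The plan is to establish the two inclusions separately, using throughout the description of the trivial coverings provided by Lemma~\ref{lem:M}: a functor $F$ lies in $\cM$ exactly when both $F$ and $\Supp F$ are discrete fibrations. I will also use that, for the absolute Galois structure attached to the adjunction (\ref{adj:Gpd-C}), a monadic extension is precisely an effective descent morphism of $\Gpd(\cC)$, so that $F\in\cM^*$ means that $p^*F\in\cM$ for some effective descent morphism $p$.

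For the implication ``discrete fibration $\Rightarrow$ covering'', I would take as splitting morphism the d\'ecalage counit $\epsilon(\bY)\colon\Dec(\bY)\to\bY$, which is of effective descent by Proposition~\ref{prop:dec.desc}. Since limits in $\Gpd(\cC)$ are computed componentwise, the comparison map witnessing that $\epsilon(\bY)^*F$ is a discrete fibration is the pullback, along the arrow-component of $\epsilon(\bY)$, of the corresponding comparison map for $F$; hence $\epsilon(\bY)^*F$ is again a discrete fibration. Its codomain $\Dec(\bY)$ is an internal equivalence relation, and a short diagram chase (over an equivalence relation, a lift in a discrete fibration is determined by its source and target, so the arrow-object of the domain embeds into the square of its object-of-objects) shows that a discrete fibration with equivalence relation codomain has equivalence relation domain. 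Thus $\epsilon(\bY)^*F$ is a morphism of $\sfEq(\cC)$; as $\cC$ is exact, equivalence relations are effective, so the unit of $\Supp\dashv U$ is invertible on objects of $\sfEq(\cC)$, whence $\Supp(\epsilon(\bY)^*F)\cong\epsilon(\bY)^*F$ is a discrete fibration too. By Lemma~\ref{lem:M}, $\epsilon(\bY)^*F\in\cM$, i.e.\ $F$ is split by $\epsilon(\bY)$, so $F\in\cM^*$.

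For the converse, let $F\colon\bX\to\bY$ lie in $\cM^*$, split by an effective descent morphism $p\colon\bE\to\bY$; then $p^*F\in\cM$, so $p^*F$ is a discrete fibration by Lemma~\ref{lem:M}. Writing $\kappa_F\colon X_1\to X_0\times_{Y_0}Y_1$ for the canonical map into the pullback of $f_0$ and $c$, so that $F$ is a discrete fibration iff $\kappa_F$ is an isomorphism, the componentwise computation of $\bW=\bE\times_\bY\bX$ identifies the corresponding map $\kappa_{p^*F}$ with the pullback of $\kappa_F$ along $p_1\colon E_1\to Y_1$; hence $p_1^*(\kappa_F)$ is an isomorphism. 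To deduce that $\kappa_F$ itself is invertible one needs $p_1$ to be of effective descent in $\cC$ (or at least a regular epimorphism, which in the exact category $\cC$ amounts to the same, and then $p_1^*$ reflects isomorphisms). This is the step I expect to be the main obstacle: it requires comparing effective descent in $\Gpd(\cC)$ with effective descent componentwise in $\cC$, which rests on the analysis of descent for internal groupoids in the spirit of \cite{LeC, JST} together with the regularity of $\cC$. Granting it, $\kappa_F$ is an isomorphism and $F$ is a discrete fibration. Everything else is formal: Lemma~\ref{lem:M}, the pullback-stability of discrete fibrations and of trivial coverings, and the exactness of $\cC$.
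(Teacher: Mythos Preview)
Your ``if'' argument coincides with the paper's: both split $F$ by $\epsilon(\bY)$, observe that the resulting pullback is a discrete fibration between equivalence relations (the paper identifies this pullback as $\Dec(F)$ via property~2 of \cite{B87}, while you argue directly that a discrete fibration over an equivalence relation has equivalence-relation domain), and then invoke Lemma~\ref{lem:M} together with the fact that $\Supp$ is the identity on $\sfEq(\cC)$.

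For the ``only if'' direction the paper does no more than you do: it simply refers to \cite{G01}, asserting that the exact Mal'tsev hypothesis there is inessential for this half. Your unpacking pinpoints what is really required: that the arrow-component $p_1$ of the splitting effective descent morphism be a regular epimorphism in $\cC$, so that $p_1^*$ reflects the isomorphism $\kappa_{p^*F}$ down to $\kappa_F$. In the relative setting of \cite{G01} the splitting morphism is by hypothesis a regular epimorphism of internal groupoids, and in the exact Mal'tsev context such morphisms are componentwise regular, so the step is immediate there. In the absolute structure over a merely exact $\cC$, the splitting morphism is only assumed of effective descent in $\Gpd(\cC)$, and the passage to $p_1$ being a regular epimorphism is precisely the descent-theoretic input you flag --- for which \cite{LeC} is indeed the pertinent source. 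So the obstacle you isolate is genuine and is exactly what lies behind the paper's citation; neither argument is self-contained on this point, and yours is arguably more transparent about where the external input enters.
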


\begin{proof}
The ``only if'' part can be proved as in \cite{G01}, since the additional assumptions there play no role in this part of the proof. For the ``if'' part, one can use the property 2 at page 210 in \cite{B87}. Namely, if an internal functor $F\colon\bX\to\bY$ is a discrete fibration, then $\Dec(F)$ being the pullback of $F$ along $\epsilon(\bY)$ by that property 2, $\Dec(F)$ is a discrete fibration. Therefore it is a trivial covering by Lemma \ref{lem:M}, since $\Supp(\Dec(F))=\Dec(F)$. This completes our proof since $\epsilon(\bY)$ is an effective descent morphism by Proposition \ref{prop:dec.desc}.
\end{proof}

In fact, the last proof also shows that the morphism $\epsilon(\bX)\colon\Dec(\bX)\to\bX$ is such that if a morphism of codomain \bX\ is a covering, then it is split by $\epsilon(\bX)$. Let us rephrase this fact as:

\begin{Proposition}
The morphism $\epsilon(\bX)\colon\Dec(\bX)\to\bX$ is a universal cover for \bX.
\end{Proposition}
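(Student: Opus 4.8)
The plan is short, because the statement is essentially a repackaging of the observation made just before it. Fix an internal groupoid $\bX$. A \emph{universal cover} for $\bX$ should mean a covering $p$ with codomain $\bX$ through which every covering of $\bX$ is split. So I would prove two things: (a) $\epsilon(\bX)\colon\Dec(\bX)\to\bX$ is itself a covering of $\bX$; and (b) every covering with codomain $\bX$ is split by $\epsilon(\bX)$. Point (b) is literally the content of the remark preceding the statement, extracted from the proof of Proposition \ref{prop:df}, so the only thing left to record carefully is (a).

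For (a), I would check directly that $\epsilon(\bX)$ is an internal discrete fibration and then invoke Proposition \ref{prop:df}. The object component of $\epsilon(\bX)$ is $d\colon X_1\to X_0$ and its morphism component is $p_2\colon X_2\to X_1$. Since $X_2=X_1\times_{(d,c)}X_1$, with $p_1$ and $p_2$ the two projections and $p_1$ precisely the structural map of the equivalence relation $\Dec(\bX)$ involved in the discrete-fibration square, that square is—up to the canonical identifications—exactly this defining pullback; hence it is a pullback and $\epsilon(\bX)\in\cM^*$. (If, in the intended reading, a universal cover is only required to be a monadic extension splitting all coverings, then (a) is even more immediate: $\epsilon(\bX)$ is an effective descent morphism by Proposition \ref{prop:dec.desc}.)

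For (b), given a covering $F$ with codomain $\bX$, Proposition \ref{prop:df} makes $F$ an internal discrete fibration; by property 2 on page 210 of \cite{B87} its pullback along $\epsilon(\bX)$ is $\Dec(F)$, which is again a discrete fibration and a functor between (effective, since $\cC$ is exact) equivalence relations, so $\Supp(\Dec(F))=\Dec(F)$; Lemma \ref{lem:M} then yields $\Dec(F)\in\cM$, i.e.\ $F$ is split by $\epsilon(\bX)$. Combining (a) and (b), $\epsilon(\bX)$ is a covering of $\bX$ that splits every covering of $\bX$, which is the assertion.

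I do not expect a genuine obstacle here; the only point needing care is bookkeeping around the terminology. In particular, ``universal cover'' is not to be read as ``simply connected cover'': indeed $\pi_0(\Dec(\bX))\cong X_0$, because $c$ is a split epimorphism (so a regular epimorphism whose coequalizer-of-kernel-pair is $X_0$), and hence $\Dec(\bX)$ is generally not connected—it is rather the universal cover in the sense of dominating all others. Once this is fixed, the two ingredients above assemble with no further work, the routine part being merely the identification in (a) of the discrete-fibration square of $\epsilon(\bX)$ with the defining pullback $X_2=X_1\times_{(d,c)}X_1$.
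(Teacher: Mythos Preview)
Your proposal is correct and follows the paper's own approach: the paper offers no separate argument, simply observing that the proof of Proposition~\ref{prop:df} already shows every covering with codomain $\bX$ is split by $\epsilon(\bX)$, which is exactly your point (b). Your additional verification (a) that $\epsilon(\bX)$ is itself a discrete fibration is correct (the discrete-fibration square is indeed the defining pullback $X_2=X_1\times_{(d,c)}X_1$), though the paper's implicit reading of ``universal cover'' here appears to require only the splitting property, so (a) is extra but entirely harmless.
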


It is shown in \cite{B87} that, in every Barr-exact category \cC, internal discrete fibrations are part of a factorization system on $\Gpd(\cC)$, where the internal functors in the corresponding orthogonal class are called \emph{final}. This is an internal version of the \emph{comprehensive} factorization system introduced in \cite{SW}. It is then clear that the pair $(\cE',\cM^*)$ is a factorization system if and only if the class $\cE'$ of morphisms stably in \cE\ coincide with the class of final functors. Example \ref{final.not.pbs} below shows that, in fact, final functors are not stably in \cE, so $(\cE,\cM)$ does not admit an associated monotone-light factorization system. Let us notice that, on the contrary, the left adjunction in (\ref{diag:double.adj}) is a restriction of the one considered in \cite{Xarez}, hence it admits a monotone-light factorization system.

The following proposition is the translation in terms of groupoids of Corollary 5.2 in \cite{CMM14} (thanks to the equivalence between internal groupoids and internal crossed modules \cite{J03}) and provides a characterization of final functors in a semi-abelian context. We recall that in this context the functor $\pi_1$ associates, with every groupoid $\bX$, the kernel of the morphism $(d,c)\colon X_1\to X_0\times X_0$.

\begin{Proposition} \label{char.final.semiab}
A functor $F\colon\bX\to\bY$ between internal groupoids in a semi-abelian category \cC\ is final if and only if $\pi_0(F)$ is an isomorphism and $\pi_1(F)$ is a regular epimorphism.
\end{Proposition}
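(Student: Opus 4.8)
The plan is to establish both implications by transporting the known characterization of the comprehensive (final/discrete-fibration) factorization across the equivalence between internal groupoids and internal crossed modules, and then invoking Corollary 5.2 of \cite{CMM14}. Concretely, in the semi-abelian setting one has the Brown--Spencer type equivalence $\Gpd(\cC)\simeq\mathsf{XMod}(\cC)$ (see \cite{J03}), under which the functor $\pi_0$ corresponds to the cokernel functor and $\pi_1$ corresponds to the functor taking a crossed module $(\partial\colon X_1\to X_0)$ to its kernel $\ker\partial$; this is exactly the identification used to state that $\pi_1(\bX)=\ker((d,c)\colon X_1\to X_0\times X_0)$. So the first step is to spell out that a functor $F=(f_0,f_1)\colon\bX\to\bY$ corresponds to a morphism of crossed modules, that $\pi_0(F)$ corresponds to the induced map on cokernels and $\pi_1(F)$ to the induced map on kernels, and that $F$ is a discrete fibration in $\Gpd(\cC)$ precisely when the corresponding square $cf_1=f_0 c$ is a pullback, which on the crossed module side translates into a condition on the associated map of short exact sequences.

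Next I would recall the statement of Corollary 5.2 of \cite{CMM14}: in a semi-abelian category, a morphism of crossed modules lies in the ``$\mathcal{E}'$-class'' of the relevant factorization system if and only if the induced map on $\pi_0$ (cokernels) is an isomorphism and the induced map on $\pi_1$ (kernels) is a regular epimorphism. The only real work is then to verify that the factorization system of \cite{CMM14} corresponds, under the equivalence $\Gpd(\cC)\simeq\mathsf{XMod}(\cC)$, to the comprehensive factorization system $(\mathcal{E}',\cM^*)$ of \cite{B87,SW} on $\Gpd(\cC)$, i.e.\ that its left class corresponds to final functors and its right class to discrete fibrations. Given Proposition \ref{prop:df} (discrete fibrations $=\cM^*$) and the fact, recalled just above the statement, that discrete fibrations and final functors form a factorization system, it suffices to identify \emph{either} of the two classes; the right class is the easier one, since the pullback condition defining a discrete fibration translates directly along the equivalence into the defining condition of the right class in \cite{CMM14}. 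Once the two factorization systems are identified, the claimed characterization of final functors is literally Corollary 5.2 of \cite{CMM14} rephrased via $\pi_0$ and $\pi_1$.

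The main obstacle I anticipate is precisely this bookkeeping across the equivalence: making sure that the crossed-module factorization system in \cite{CMM14} is the \emph{same} one we want, and not a cousin of it, and that the functors called $\pi_0$ and $\pi_1$ there agree on the nose (not merely up to some twist) with the connected-component reflector and the ``object of loops'' functor used in this paper. In particular one must check that regular epimorphisms are preserved by the equivalence (they are, since it is an equivalence of categories, hence exact) so that ``$\pi_1(F)$ is a regular epimorphism'' has the same meaning on both sides, and that isomorphisms of $\pi_0$ likewise transport. If one prefers to avoid relying on the exact form of \cite{CMM14}, an alternative is a direct argument: characterize discrete fibrations $F$ between internal groupoids in a semi-abelian category as those $F$ for which $\pi_0(F)$ is an isomorphism and $\pi_1(F)$ is an isomorphism and $f_0$ is a regular epi, characterize final functors as those inverted by $\pi_0$ up to the orthogonality condition, and then run the standard argument that a map is in the left class of a factorization system iff its $(\cE',\cM^*)$-factorization has identity $\cM^*$-part, computing that factorization explicitly via the comprehensive factorization; but this is longer, and deferring to \cite{CMM14} through \cite{J03} is the cleaner route, so that is the one I would take.
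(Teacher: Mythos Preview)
Your proposal is correct and matches the paper's approach exactly: the paper gives no detailed proof of this proposition, stating only that it is ``the translation in terms of groupoids of Corollary 5.2 in \cite{CMM14} (thanks to the equivalence between internal groupoids and internal crossed modules \cite{J03})''. Your plan to transport the result across the Brown--Spencer equivalence $\Gpd(\cC)\simeq\mathsf{XMod}(\cC)$, identify $\pi_0$ and $\pi_1$ with the cokernel and kernel functors, and check that the comprehensive factorization corresponds to the factorization system of \cite{CMM14} is precisely the bookkeeping the paper leaves implicit.
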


\begin{Example} \label{final.not.pbs}
Let $A$ be a non-trivial abelian group. Consider the following pullback in $\Gpd(\Ab)$:
\[
\xymatrix{
& 0 \ar[rr] \ar@<1ex>[dd] \ar@<-1ex>[dd] \ar[dl] & & A \ar@<1ex>[dd]^{1_A} \ar@<-1ex>[dd]_{1_A} \ar[dl]_{(0,(1_A,1_A))} \\
A \ar[rr]^(.55){(1_A,0)} \ar@<1ex>[dd]^{1_A} \ar@<-1ex>[dd]_{1_A} & & A \times (A \times A) \ar@<1ex>[dd]^(.3){1_A \times p_2} \ar@<-1ex>[dd]_(.3){1_A \times p_1} \\
& 0 \ar[rr] \ar[dl] \ar[uu] & & A \ar[dl]^{(0,1_A)} \ar[uu] \\
A \ar[rr]_-{(1_A,0)} \ar[uu] & & A \times A \ar[uu]
}
\]
where $p_1$ and $p_2$ are the projections of the product $A \times A$. Applying $\pi_1$ we obtain the trivial square on the left here below (all the groupoids involved being relations), while applying $\pi_0$ we have the right hand side square:
\[
\xymatrix{
0 \ar[r] \ar[d] & 0 \ar[d] & 0 \ar[r] \ar[d] & A \ar[d]^0 \\
0 \ar[r] & 0 & A \ar[r]_{1_A} & A
} 
\]
By Proposition \ref{char.final.semiab}, the front face of the cube above is a final functor, but a pullback of it (the back face) is not inverted by $\pi_0$. In other words, final functors are not stably in the class \cE\ of functors inverted by $\pi_0$.
\end{Example}

\section{The comprehensive factorization as a relative monotone-light factorization system} \label{sec:ex.mal}

It is explained in \cite{G99} that internal groupoids or, equivalently, internal categories, in an exact Mal'tsev category \cC\ have some particularly nice exactness properties, since they form again an exact Mal'tsev category. Moreover, in the subsequent paper \cite{G01}, the reflection $\pi_0\colon\Gpd(\cC)\to\cC$ is considered as part of a Galois structure where the chosen classes of morphisms are the regular epimorphisms (also called ``extensions''). In fact, in this context, \cC\ turns out to be a Birkhoff subcategory of $\Gpd(\cC)$, and the general theory of central extensions developed in \cite{JK} applies to this case. Trivial coverings (or trivial extensions) and coverings (or central extensions) with respect to the above Galois structure are characterized in \cite{G01}.

\begin{Proposition}[\cite{G01}] \label{prop:M*}
A regular epimorphic functor $F$ in the category $\Gpd(\cC)$ of internal groupoids in an exact Mal'tsev category $\cC$ is
\begin{itemize}
\item a \emph{trivial covering} if and only if both $F$ and $\Supp F$ are discrete fibrations;
\item a \emph{covering} if and only if $F$ is a discrete fibration.
\end{itemize}
\end{Proposition}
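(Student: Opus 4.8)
The plan is to derive both statements from the characterizations already obtained for the \emph{absolute} Galois structure associated with (\ref{adj:Gpd-C}), using two structural facts about the exact Mal'tsev setting: first, that $\Gpd(\cC)$ is again exact Mal'tsev \cite{G99}, hence Barr-exact, so that regular epimorphisms are pullback stable and coincide with the effective descent morphisms; and second, that $\cC$ is a Birkhoff subcategory of $\Gpd(\cC)$, so the relative Galois structure whose distinguished class $\cF$ is that of regular epimorphisms is covered by the theory of \cite{JK}. In particular, in this situation the monadic extensions for $\cF$ are precisely the effective descent morphisms of $\Gpd(\cC)$, i.e.\ the regular epimorphisms.

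Granting this, the first bullet is immediate: for a regular epimorphic functor $F$, being a trivial covering for the relative structure means exactly that the square (\ref{diag:triv.cov}) is a pullback, which is the defining condition for membership in the class $\cM$ of the absolute structure, and Lemma \ref{lem:M} identifies that class with the functors $F$ for which both $F$ and $\Supp F$ are discrete fibrations. For the second bullet I would show that, among regular epimorphisms, relative coverings and absolute coverings coincide, and then invoke Proposition \ref{prop:df}. If $F$ is a regular epi which is a relative covering, it is split by a monadic extension $p$ for $\cF$; by the structural fact above $p$ is an effective descent morphism of $\Gpd(\cC)$, and since a relative trivial covering is in particular an absolute one, $F$ is an absolute covering, hence a discrete fibration by Proposition \ref{prop:df}. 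Conversely, if $F\colon\bX\to\bY$ is a regular epimorphic discrete fibration, take $p=\epsilon(\bY)\colon\Dec(\bY)\to\bY$: it is an effective descent morphism by Proposition \ref{prop:dec.desc}, hence a regular epi and a monadic extension for $\cF$; its pullback $p^*(F)$ is, by property 2 on p.~210 of \cite{B87}, the d\'ecalage $\Dec(F)$, which is again a discrete fibration, is a regular epimorphism because regular epis are pullback stable in $\Gpd(\cC)$, and satisfies $\Supp(\Dec(F))=\Dec(F)$ since $\Dec(\bX)$ and $\Dec(\bY)$ are (effective, as $\cC$ is exact) equivalence relations; so $p^*(F)$ lies in $\cM$ by Lemma \ref{lem:M}, hence is a relative trivial covering, showing $F$ is a relative covering.

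Thus the argument is essentially a transcription of the proofs of Lemma \ref{lem:M}, Proposition \ref{prop:df} and Proposition \ref{prop:dec.desc}, with $\epsilon(\bX)\colon\Dec(\bX)\to\bX$ again playing the role of a universal cover, and the only delicate point is the comparison between the relative and the absolute Galois structures --- specifically, the identification of the monadic extensions for the class of regular epimorphisms with the effective descent morphisms of the exact category $\Gpd(\cC)$. Once this is settled, the remaining verifications (pullback stability of regular epimorphisms and the fact that $\Supp$ restricts to the identity on internal equivalence relations) are routine.
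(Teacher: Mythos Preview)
The paper does not give its own proof of this proposition: it is quoted from \cite{G01} and stated without argument. Your proposal is nonetheless correct, and it takes a route different from \cite{G01}: rather than arguing directly in the relative setting, you bootstrap from the absolute characterizations already established in Section~\ref{sec:conn.comp} (Lemma~\ref{lem:M}, Propositions~\ref{prop:dec.desc} and~\ref{prop:df}), using Barr-exactness of $\Gpd(\cC)$ \cite{G99} to identify monadic extensions for $\cF$ with regular epimorphisms. This buys a short argument entirely internal to the present paper, whereas the proof in \cite{G01} is self-contained in the relative framework of \cite{JK}. The one point that deserves an explicit word is why $\epsilon(\bY)$ belongs to $\cF$: since $\Gpd(\cC)$ is Barr-exact, effective descent morphisms coincide with regular epimorphisms, and Proposition~\ref{prop:dec.desc} supplies the former; after that your verification that $\Dec(F)$ is a relative trivial covering goes through exactly as you describe.
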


The class \cM\ of trivial coverings, together with the class \cE\ of functors inverted by $\pi_0$, form a factorization system in the sense of \cite{Ch04} for the class \cF\ of regular epimorphic internal functors in \cC. What is nice here is that this factorization system can be simultaneously stabilised and localised to yield a corresponding monotone-light factorization system relative to \cF.
\medskip

Given a functor $F\colon\bX\to\bY$ between internal groupoids, consider the comparison arrow $\phi_F$ in the diagram
\[
\xymatrix@=2ex{
X_1 \ar[ddd]_{(d,c)} \ar[rrr]^{f_1} \ar[dr]_(.35){\phi_F} & & & Y_1 \ar[ddd]^{(d,c)} \\
& (X_0\times X_0) \times_{(Y_0\times Y_0)} Y_1 \ar[ddl] \ar[urr] \\
\\
X_0 \times X_0 \ar[rrr]_{f_0 \times f_0} & & & Y_0 \times Y_0
}
\]
We recall that $F$ is said to be
\begin{itemize}
\item \emph{full} when $\phi_F$ is a regular epimorphism;
\item \emph{faithful} when $\phi_F$ is a monomorphism;
\item \emph{fully faithful} when $\phi_F$ is an isomorphism;
\item \emph{essentially surjective on objects} when $\pi_0(F)$ is a regular epimorphism.
\end{itemize}
It is easy to prove that the class of full functors is pullback stable and that if a functor $F$ is full, then $\Supp F$ is fully faithful. Full functors also enjoy the following useful property:

\begin{Lemma} \label{lemma:supp.full}
The functor $\Supp\colon\Gpd(\cC)\to\sfEq(\cC)$ preserves the pullbacks of full functors along any functor.
\end{Lemma}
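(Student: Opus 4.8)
The plan is to prove that the canonical comparison is an isomorphism by comparing two subobjects of $W_0\times W_0$. Let $F\colon\bX\to\bY$ be full, let $G\colon\bZ\to\bY$ be an arbitrary functor, and form the pullback
\[
\xymatrix{
\bW \ar[r]^{G'} \ar[d]_{F'} & \bZ \ar[d]^{G} \\
\bX \ar[r]_{F} & \bY
}
\]
Since $\sfEq(\cC)$ is reflective in $\Gpd(\cC)$ (with reflector $\Supp$), finite limits in $\sfEq(\cC)$ are computed as in $\Gpd(\cC)$, hence levelwise in $\cC$; in particular $W_0=X_0\times_{Y_0}Z_0$, and both $\Supp(\bW)$ and $\Supp(\bX)\times_{\Supp(\bY)}\Supp(\bZ)$ have $W_0$ as object of objects, the canonical comparison $c\colon\Supp(\bW)\to\Supp(\bX)\times_{\Supp(\bY)}\Supp(\bZ)$ being the identity on objects. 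A morphism of internal equivalence relations that is the identity on objects is the same thing as an inclusion of subobjects of the square of the common object of objects, so the whole statement reduces to the equality of subobjects of $W_0\times W_0$
\[
\Sigma W_1\ =\ \bigl(\Supp(\bX)\times_{\Supp(\bY)}\Supp(\bZ)\bigr)_1,
\]
of which the inclusion ``$\le$'' is just the existence of $c$.

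For the reverse inclusion I would use fullness only through its two recorded consequences. First, the projection $\Supp(\bX)\times_{\Supp(\bY)}\Supp(\bZ)\to\Supp(\bZ)$ is a morphism of internal equivalence relations lying over the projection $r\colon W_0=X_0\times_{Y_0}Z_0\to Z_0$; by the very definition of such a morphism this gives
\[
\bigl(\Supp(\bX)\times_{\Supp(\bY)}\Supp(\bZ)\bigr)_1\ \le\ (r\times r)^{-1}(\Sigma Z_1)
\]
as subobjects of $W_0\times W_0$. Second, since full functors are pullback stable, the projection $G'\colon\bW\to\bZ$ is full, hence $\Supp(G')$ is fully faithful; and, its object part being $r$, ``$\Supp(G')$ fully faithful'' means precisely that $\Sigma W_1=(r\times r)^{-1}(\Sigma Z_1)$ as subobjects of $W_0\times W_0$. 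Putting these together with the inclusion already coming from $c$ yields
\[
\Sigma W_1\ \le\ \bigl(\Supp(\bX)\times_{\Supp(\bY)}\Supp(\bZ)\bigr)_1\ \le\ (r\times r)^{-1}(\Sigma Z_1)\ =\ \Sigma W_1,
\]
so all three subobjects coincide, $c$ is an isomorphism, and $\Supp$ preserves the pullback.

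I do not expect a serious obstacle: the argument is a short sandwich once the dictionary is set up. The points deserving attention are (a) that the pullback in $\sfEq(\cC)$ is the levelwise one, so that $c$ genuinely is the identity on objects and the relevant objects of arrows genuinely are subobjects of $W_0\times W_0$; (b) the elementary remark that a morphism of internal equivalence relations over a map $h$ amounts exactly to a containment $R\le(h\times h)^{-1}(S)$; and (c) the correct appeal to the facts, already established in the paper, that full functors are pullback stable and that $\Supp$ carries full functors to fully faithful ones. If one prefers to avoid the subobject calculus, the same argument can be run pointwise in the internal language of the regular category $\cC$: an arrow of $\bW$ is a compatible pair consisting of an arrow of $\bX$ and an arrow of $\bZ$ over a common arrow of $\bY$, and such a pair is ``connected'' as soon as its $\bZ$-component is, precisely because $G'$ is full.
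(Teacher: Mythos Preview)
Your argument is correct and follows essentially the same line as the paper's: both use that fullness is pullback stable and that $\Supp$ takes full functors to fully faithful ones, then finish with a short diagram chase on the arrow level over $W_0\times W_0$. The paper phrases the last step as a cube in which the bottom, front and back faces are pullbacks (using fully-faithfulness of both $\Supp(F)$ and $\Supp(\overline F)$) and concludes by cancellation that the top face is too, whereas your subobject sandwich uses only fully-faithfulness of $\Supp(G')$; this is a mildly more economical packaging of the same idea.
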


\begin{proof}
Let the diagram below be a pullback in $\Gpd(\cC)$.
\[
\xymatrix{
\bW \ar[r]^-{\overline{F}} \ar[d]_{\overline{G}} & \bZ \ar[d]^G \\
\bX \ar[r]_F & \bY
}
\]
If $F$ is full, so is $\overline{F}$, hence both $\Supp(F)$ and $\Supp(\overline{F})$ are fully faithful. It is then clear that in the cube
\[
\xymatrix@=4ex{
\Sigma W_1 \ar[rr]^{\Sigma\overline{f}_1} \ar[dd]_{(r_1,r_2)} \ar[dr]_{\Sigma\overline{g}_1} & & \Sigma Z_1 \ar[dd]_(.3){(r_1,r_2)} \ar[dr]^{\Sigma g_1} \\
& \Sigma X_1 \ar[rr]_(.35){\Sigma f_1} \ar[dd]_(.3){(r_1,r_2)} & & \Sigma Y_1 \ar[dd]^{(r_1,r_2)} \\
W_0 \times W_0 \ar[dr]_{\overline{g}_0\times \overline{g}_0} \ar[rr]_(.7){\overline{f}_0\times \overline{f}_0} & & Z_0 \times Z_0 \ar[dr]^{g_0\times g_0} \\
& X_0 \times X_0 \ar[rr]_{f_0 \times f_0} & & Y_0 \times Y_0
}
\]
the top face is a pullback by cancellation since the bottom, back and front face are.
\end{proof}

The next two results are specific of Mal'tsev or Goursat categories and will be useful later on. Recall that, in a non-exact context, the functor $Q$ in (\ref{diag:double.adj}) is only defined on the subcategory $\EffEq(\cC)$ of effective equivalence relations in \cC.

\begin{Proposition} \label{prop:goursat}
A regular category \cC\ is Goursat if and only if the functor $Q\colon\EffEq(\cC)\to\cC$ preserves exact forks.
\end{Proposition}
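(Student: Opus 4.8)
The plan is to unwind what ``$Q$ preserves exact forks'' means and to match it with the characterisation of Goursat categories via direct images of equivalence relations. First I would record that $Q$ is left adjoint to the full inclusion $D'\colon\cC\to\EffEq(\cC)$, so it preserves whatever colimits exist in $\EffEq(\cC)$; in particular it sends the coequaliser half of any exact fork to a coequaliser, and the whole issue is whether it also preserves the kernel-pair half. Since limits in $\EffEq(\cC)$ are computed on the object of objects (the functor $\mathbb X\mapsto X_0$ has $D'$ as a left adjoint and the indiscrete-relation functor as a right adjoint), for an exact fork $\Eq(p)\rightrightarrows\mathbb X\xrightarrow{p}\mathbb Q$ the object $\Eq(p)$ has $X_0\times_{Q_0}X_0$ as object of objects and $p_0$ is a regular epimorphism. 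A short computation then identifies $Q(\Eq(p))$ with the direct image $q(\Eq(p_0))$ of the effective equivalence relation $\Eq(p_0)$ along the quotient $q\colon X_0\to X_0/X_1=Q(\mathbb X)$, identifies $\Eq(Q(p))$ with $q(p_0^{-1}(Q_1))$, and shows that the canonical comparison $Q(\Eq(p))\to\Eq(Q(p))$ is the monomorphism coming from $\Eq(p_0)\leq p_0^{-1}(Q_1)$. Thus ``$Q$ preserves exact forks'' amounts to the equality $q(\Eq(p_0))=q(p_0^{-1}(Q_1))$ of relations on $X_0/X_1$ for every exact fork.

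For the ``only if'' direction I would assume $\cC$ Goursat and use the characterisation that there the direct image of an equivalence relation along a regular epimorphism is again an equivalence relation. Then in an exact fork the coequaliser $\mathbb Q$ of $\Eq(p)\rightrightarrows\mathbb X$ has $Q_1=p_0(X_1)$ (no further saturation is needed, and $q(\Eq(p_0))$ is itself already an equivalence relation), so by the $3$-permutability identity $p_0^{-1}(Q_1)=p_0^{-1}(p_0(X_1))=X_1\vee\Eq(p_0)$. A short computation, using that $q$ is a regular epimorphism and that $q(X_1)=q(\Eq(q))=\Delta$, gives $q(p_0^{-1}(Q_1))=q(X_1\vee\Eq(p_0))\leq\langle q(\Eq(p_0))\rangle=q(\Eq(p_0))$, so the required equality holds and $Q$ preserves the fork.

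For the ``if'' direction I would argue conversely: given effective equivalence relations $X_1$ and $S$ on an object $X_0$, with quotients $q\colon X_0\to X_0/X_1$ and $q_S\colon X_0\to X_0/S$, form the exact fork associated with $\mathbb X=(X_1\text{ on }X_0)$ and $p_0=q_S$, so that $Q_1$ is the effective equivalence relation generated by $q_S(X_1)$. Here $\Eq(p_0)=S$, and the comparison $Q(\Eq(p))\to\Eq(Q(p))$ is the inclusion $q(S)\hookrightarrow q(p_0^{-1}(Q_1))$. The right-hand side is an equivalence relation, since $p_0^{-1}(Q_1)$ is an effective equivalence relation containing $X_1=\Eq(q)$; hence it contains the equivalence relation generated by $q(S)$. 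As $Q$ preserves this fork the inclusion is an equality, so $q(S)$ equals the equivalence relation it generates, i.e.\ $q(S)$ is itself an equivalence relation. Since $q$ ranges over all regular epimorphisms out of $X_0$ and $S$ over all effective equivalence relations on $X_0$, this says exactly that $\cC$ is Goursat.

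The step I expect to be the main obstacle is the bookkeeping of effectiveness in the merely regular (non-exact) setting: one must check that the equivalence relations occurring along the way — above all $Q_1$, the effective equivalence relation generated by $p_0(X_1)$ that is needed to form $\mathbb Q$, and also the various direct images, inverse images and intersections — really are effective and that the relevant coequalisers exist in $\EffEq(\cC)$. The cleanest way to handle this is to isolate these points by phrasing both implications as a translation between ``$Q$ preserves exact forks'' and a ready-made characterisation of Goursat categories (the direct-image condition above, or the equivalent Beck--Chevalley-type conditions), rather than reproving $3$-permutability by hand.
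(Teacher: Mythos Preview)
Your plan is more elaborate than needed and contains a genuine gap. The paper's proof is one line: unwinding the statement ``$Q$ preserves exact forks'' applied to $\Eq(p)\rightrightarrows\bX\xrightarrow{p}\bY$ produces precisely a $3\times3$ diagram in \cC\ whose three columns and top two rows are exact forks, and the conclusion is that the bottom row is exact. This \emph{is} the lower denormalised $3\times3$ lemma, and the equivalence of that lemma with the Goursat property for regular categories is the content of \cite{GR12,Lack}. No translation into direct images is required.

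The specific problem with your route is the claimed identification $Q(\Eq(p))\cong q(\Eq(p_0))$. What is true in general is that the canonical map $\Eq(p)_0\twoheadrightarrow Q(\Eq(p))\to Q(\bX)\times Q(\bX)$ has image $q(\Eq(p_0))$, so $Q(\Eq(p))\twoheadrightarrow q(\Eq(p_0))$ is a regular epimorphism; but it is an isomorphism only when $Q(\Eq(p))\to Q(\bX)^2$ is already a monomorphism, and that is part of the very conclusion ``$Q(\Eq(p))=\Eq(Q(p))$'' you are trying to establish. Consequently your reformulation ``$Q$ preserves exact forks $\Leftrightarrow q(\Eq(p_0))=q(p_0^{-1}(Q_1))$'' is strictly weaker than what is needed, and the circularity reappears in both directions of your argument. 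Your final paragraph correctly anticipates that effectiveness bookkeeping in \EffEq(\cC)\ is delicate (existence of the coequaliser $\bY$, effectiveness of $\Eq(p)$, etc.), but the remedy is not to reach for the direct-image characterisation: the $3\times3$ lemma is the ready-made characterisation that matches the statement on the nose and sidesteps all of these issues.
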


\begin{proof}
This follows easily from the fact that, for regular categories, the Goursat property is equivalent to the lower $3\times 3$ denormalised lemma, as proved in \cite{GR12, Lack}.
\end{proof}

\begin{Lemma} \label{lemma:pi0.pb}
Let \cC\ be an exact Mal'tsev category, and let the left hand side downward diagram below be a pullback in $\Gpd(\cC)$, where the vertical arrows are split epimorphisms. Then the comparison arrow $u$ in the induced diagram on the right hand side is a regular epimorphism.
\[
\xymatrix{
\bW \ar[r] \ar@<.5ex>[d] & \bZ \ar@<.5ex>[d] \\
\bX \ar[r] \ar@<.5ex>[u] & \bY \ar@<.5ex>[u]
}
\qquad\qquad
\xymatrix@=2ex{
\pi_0(\bW) \ar[ddd] \ar[rrr] \ar[dr]_(.35){u} & & & \pi_0(\bZ) \ar[ddd] \\
& \pi_0(\bX) \times_{\pi_0(\bY)} \pi_0(\bZ) \ar[ddl] \ar[urr] \\
\\
\pi_0(\bX) \ar[rrr] & & & \pi_0(\bY)
}
\]
\end{Lemma}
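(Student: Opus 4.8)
The plan is to reduce the statement to a statement about coequalizers (since $\pi_0$ is computed as the coequalizer of $(d,c)$) and then exploit the exact Mal'tsev hypothesis to propagate the regular-epi property through the relevant limit/colimit interchange. First I would record the concrete description of the comparison arrow $u$: writing $q_{\bX}\colon X_0 \to \pi_0(\bX)$ for the coequalizer of $(d,c)$ (and similarly for $\bY$, $\bZ$, $\bW$), the objects $\pi_0(\bW)$, $\pi_0(\bX)$, $\pi_0(\bY)$, $\pi_0(\bZ)$ are obtained by these coequalizers, and $u$ is the factorization of $q_{\bW}\colon W_0 \to \pi_0(\bW)$ through the pullback $\pi_0(\bX)\times_{\pi_0(\bY)}\pi_0(\bZ)$. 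Since $q_{\bW}$ is a regular epimorphism and regular epis factor as (regular epi)(mono), it suffices to show that $u$ is a regular epimorphism, equivalently — because in an exact Mal'tsev category regular epimorphisms are pullback-stable and the relevant pushout/pullback calculus is well-behaved — that the image of $W_0$ is all of $\pi_0(\bX)\times_{\pi_0(\bY)}\pi_0(\bZ)$.

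The key step is the following. In an exact Mal'tsev category, regular epimorphisms are stable under pullback, and, more to the point, the functor sending a reflexive graph to the coequalizer of its pair of maps behaves well with respect to pullbacks of split epimorphisms: this is exactly the kind of situation governed by Lemma~\ref{lemma:pi0.pb}'s ambient hypotheses, and it is closely related to Proposition~\ref{prop:goursat} (the functor $Q$ preserving exact forks). Concretely, I would argue as follows. On objects we have a pullback square $W_0 \cong X_0 \times_{Y_0} Z_0$ with both vertical legs split epimorphisms (this is the object-component of the given pullback of groupoids, and split epis are pullback-stable). Applying the coequalizer functor to the three relevant graphs and using that in an exact Mal'tsev category the "direction" morphisms (kernel pairs of $q$) are again well-controlled, the canonical comparison from $\pi_0$ of the pullback to the pullback of the $\pi_0$'s is a regular epimorphism. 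The cleanest route is to present $\pi_0(\bX)\times_{\pi_0(\bY)}\pi_0(\bZ)$ as a quotient: it is the coequalizer of an appropriate relation on $X_0\times_{Y_0}Z_0 = W_0$, namely the join (or composite) of the pullbacks along $W_0 \to X_0$ and $W_0 \to Z_0$ of the effective equivalence relations corresponding to $q_{\bX}$ and $q_{\bZ}$; because $\bW$ maps onto this relation via its own hom-object $W_1$ in a way that is jointly regular-epimorphic (using fullness-type arguments and the Mal'tsev property, cf.\ Lemma~\ref{lemma:supp.full} and the discussion of full functors), the induced map $u$ on coequalizers is a regular epimorphism.

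The main obstacle I anticipate is precisely controlling this last point: showing that $W_1$ (together with the groupoid structure of $\bW$) surjects onto the equivalence relation on $W_0$ whose quotient is $\pi_0(\bX)\times_{\pi_0(\bY)}\pi_0(\bZ)$. Equivalently, one must show that the equivalence relation on $W_0$ generated by the partition into path-components, when pushed into the fibre product of the component-objects, is \emph{everything} needed — i.e.\ that two objects of $\bW$ lying over component-equivalent objects of $\bX$ and of $\bZ$ are themselves connected by a zig-zag of arrows in $\bW$, up to a regular-epi's worth of error. This is where the Mal'tsev hypothesis does the real work: it guarantees that the composite of the two pulled-back effective equivalence relations is again an equivalence relation (so no further zig-zags are needed), and that the relevant square of regular epimorphisms is a pushout of regular epis, hence that the comparison on quotients is a regular epi. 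I would make this precise by invoking the denormalised $3\times 3$-lemma (as in the proof of Proposition~\ref{prop:goursat}) applied to the diagram of effective equivalence relations $R[q_{\bW}]$, the pullback of $R[q_{\bX}]$, the pullback of $R[q_{\bZ}]$, and their appropriate composite, which in the exact Mal'tsev setting forces $u$ to be regular epic. The remaining verifications — that the vertical maps in all these auxiliary squares are split (they are pulled back from the split epis in the hypothesis) and that everything is natural — are routine diagram chases that I would leave to the reader.
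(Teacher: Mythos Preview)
Your reduction is correct up to the point where you note that $u$ is a regular epimorphism as soon as the composite
\[
W_0 \xrightarrow{\,q_{\bW}\,} \pi_0(\bW) \xrightarrow{\,u\,} \pi_0(\bX)\times_{\pi_0(\bY)}\pi_0(\bZ)
\]
is one. From there, however, you take an unnecessary detour. The paper's proof is a two-line argument: Lemma~4.1 of \cite{B03} (Bourn's denormalised $3\times 3$ lemma paper) says directly that, in a regular Mal'tsev category, the canonical map $W_0 = X_0\times_{Y_0}Z_0 \to \pi_0(\bX)\times_{\pi_0(\bY)}\pi_0(\bZ)$ is a regular epimorphism in this split-epi situation; since it factors through $u$, the latter is regular epic. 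Nothing about $W_1$ or the groupoid structure of $\bW$ is used beyond the existence of $q_{\bW}$.

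Your attempt to involve $W_1$ --- ``showing that $W_1$ surjects onto the equivalence relation on $W_0$ whose quotient is $\pi_0(\bX)\times_{\pi_0(\bY)}\pi_0(\bZ)$'' --- is both unnecessary and misdirected. If $W_1$ did surject onto that relation $R$, you would be showing $\Supp(\bW)\supseteq R$, which points toward $u$ being \emph{monic}, not epic; the inclusion you actually have (and need) is $\Supp(\bW)\subseteq R$, which is automatic. What you really need is that $W_0 \to W_0/R$ is a regular epi \emph{and} that $W_0/R$ coincides with the pullback $\pi_0(\bX)\times_{\pi_0(\bY)}\pi_0(\bZ)$ --- and the second of these is exactly the content of Bourn's lemma (equivalently, of the regular-pushout property of Mal'tsev categories). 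Your invocation of the $3\times 3$ lemma and of composites of equivalence relations is morally the right neighbourhood, since that is how \cite{B03} proves its Lemma~4.1, but you should apply it at the level of objects in $\cC$, not at the level of arrows of the groupoids. Drop the $W_1$ analysis and either cite \cite{B03} or reproduce its object-level argument directly.
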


\begin{proof}
By Lemma 4.1 in \cite{B03}, the canonical morphism $W_0 \to \pi_0(\bX) \times_{\pi_0(\bY)} \pi_0(\bZ)$ is a regular epimorphism, and it factors through $u$, which is then a regular epimorphism.
\end{proof}

\begin{Theorem} \label{thm:cf.ml}
Let \cC\ be an exact Mal'tsev category. The adjunction
\[
\xymatrix{
\Gpd(\cC) \ar@<1.2ex>[r]^-{\pi_0}_-\bot & \cC \ar@<1.2ex>[l]^-D
}
\]
yields a monotone-light factorization system for regular epimorphisms in the category $\Gpd(\cC)$ of internal groupoids in \cC.
\end{Theorem}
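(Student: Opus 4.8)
The plan is to verify that the quadruple $(\cE',\cM^*)$ is a relative monotone-light factorization system for the class $\cF$ of regular epimorphic internal functors in $\Gpd(\cC)$, exploiting the description of $\cM^*$ given in Proposition~\ref{prop:M*} and the universal cover supplied by the d\'ecalage. By Proposition~\ref{prop:M*}, $\cM^*$ is the class of regular epimorphic discrete fibrations, and by the general theory of Section~1 we already know that $(\cE,\cM)$ is a factorization system for $\cF$, where $\cM$ consists of the regular epimorphic functors $F$ with both $F$ and $\Supp F$ discrete fibrations. So what remains is: (1) identify $\cE'$, the class of morphisms stably in $\cE$ relative to $\cF$, with the class of \emph{final} regular epimorphisms; and (2) show that every regular epimorphic $F$ factors as an $\cE'$-morphism followed by an $\cM^*$-morphism, with the requisite orthogonality. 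Since the comprehensive factorization of a regular epimorphism $F$ in $\Gpd(\cC)$ produces a factorization $F = m\,e$ with $m$ a discrete fibration and $e$ final, and since in the exact Mal'tsev setting one checks easily that $m$ is again a regular epimorphism while $e$, being a retract-type comparison map into a pullback, is regular epi as well, the factorization exists inside $\cF$; the content is therefore to prove $e \in \cE'$.

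First I would show that every final functor in $\Gpd(\cC)$ which is a regular epimorphism is stably in $\cE$ along morphisms of $\cF$. The key tool is the universal cover $\epsilon(\bX)\colon\Dec(\bX)\to\bX$ of Proposition~\ref{prop:dec.desc}: since $\Dec(\bX)$ is an internal equivalence relation and $\epsilon(\bX)$ is an effective descent morphism whose components are split epimorphisms, pulling back along $\epsilon$ brings any functor into a situation where the vertical legs are split epimorphisms, so Lemma~\ref{lemma:pi0.pb} applies and $\pi_0$ of the relevant comparison square behaves well. Concretely, given a final regular epi $F\colon\bX\to\bY$ and an arbitrary $G\colon\bZ\to\bY$ in $\cF$, form the pullback $\overline F\colon\bW\to\bZ$; I want $\pi_0(\overline F)$ an isomorphism. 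Using that $F$ final means (via the comprehensive factorization, or directly) that $\pi_0(F)$ is an isomorphism and that $F$ is ``$\pi_1$-surjective'' in the appropriate internal sense, one pulls the whole square back along $\epsilon(\bZ)$, reducing to split epimorphic vertical arrows; there Lemma~\ref{lemma:pi0.pb} gives that the comparison $\pi_0(\bW)\to\pi_0(\bX)\times_{\pi_0(\bY)}\pi_0(\bZ)$ is a regular epi, and since $\pi_0(F)$ is invertible this target is just $\pi_0(\bZ)$, so $\pi_0(\overline F)$ is a regular epi; the reverse inequality (mono-ness of $\pi_0(\overline F)$) should follow from an analysis of $\Supp$ and the full/faithful comparison maps, using Lemma~\ref{lemma:supp.full} and the fact that final functors are full (so $\Supp F$ is fully faithful). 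Descent along $\epsilon(\bZ)$ then transports the conclusion back to $\overline F$ itself.

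Next I would establish the converse inclusion: a regular epimorphic functor stably in $\cE$ (relative to $\cF$) is final. Here the natural move is to take the comprehensive factorization $F = m\,e$ of such an $F$, observe $m$ is a discrete fibration and a regular epi, hence $m\in\cM^*$, and show $m$ must be an isomorphism, which forces $F=e$ to be final. To see $m$ is invertible: $m$ lies in $\cM^*$, so it is split as a trivial covering by some effective descent morphism, in fact by $\epsilon$ of its codomain; pulling $m$ back there makes it a trivial covering, i.e.\ in $\cM$, and an element of $\cM$ that is also in $\cE'$ (which it is, being a pullback of $F\in\cE'$ along a map in $\cF$ — note $\epsilon$ is a split epi hence in $\cF$, and $e$ is in $\cE$ always, so being in $\cE'$ is inherited appropriately) is an isomorphism, because $\cE\cap\cM$ consists of isomorphisms. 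Then descent along the effective descent morphism $\epsilon$ shows $m$ itself is an isomorphism. Finally, orthogonality of $\cE'$ and $\cM^*$ within $\cF$: this follows from orthogonality of $\cE$ and $\cM$ together with the localization argument of \cite{CJKP,Ch04}, since $\cE'\subseteq\cE$ and every $m\in\cM^*$ is locally in $\cM$; a standard descent-of-the-diagonal-filler argument over the effective descent morphism $\epsilon$ produces the unique diagonal.

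The main obstacle I anticipate is step (2) in the first paragraph as elaborated in the second — namely proving that a final regular epimorphism is \emph{stably} in $\cE$ along all of $\cF$. In the absolute setting this fails (Example~\ref{final.not.pbs}), and the whole point of the relative formulation is that restricting the test maps to regular epimorphisms $\cF$ and exploiting the Mal'tsev exactness (so that $\Gpd(\cC)$ is again exact Mal'tsev, $Q$ preserves exact forks by Proposition~\ref{prop:goursat}, and Lemma~\ref{lemma:pi0.pb} holds) is exactly what rescues stability. Getting the bookkeeping right — pulling the square back along $\epsilon$ of the correct object, verifying the vertical legs become split epis, applying Lemma~\ref{lemma:pi0.pb}, and then descending the conclusion — is where the real work lies, and one must be careful that $\epsilon(\bZ)$ (or the relevant décalage) does land in $\cF$ so that ``stably in $\cE$ relative to $\cF$'' can be invoked. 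The remaining verifications (the factorization lives in $\cF$, orthogonality, $\cM^* \subseteq \cF$) are comparatively routine given Propositions~\ref{prop:df} and~\ref{prop:M*} and the results of Section~1.
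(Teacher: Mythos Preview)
Your overall plan---identify $\cE'$ with the final regular epimorphisms and then verify the factorization system axioms directly---is in the right spirit, but it is organised differently from the paper and contains a genuine gap in the crucial stability step.

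The paper does \emph{not} attempt to characterise $\cE'$ up front. Instead it invokes the (relative version of the) ``stabilising objects'' criterion of \cite{CJKP} (Proposition~6.10 there, see \cite{Ch04}): it suffices to show that for every $\bY$ there is an effective descent morphism $P\colon\bE\to\bY$ in $\cF$ such that the $(\cE,\cM)$ factorisation of \emph{any} morphism with codomain $\bE$ is stable along $\cF$. The choice is $\bE=\Dec(\bY)$, and the payoff is immediate: since $\Dec(\bY)$ is an equivalence relation and $M$ is a discrete fibration, the intermediate object $\bW$ in $\bX\xrightarrow{E}\bW\xrightarrow{M}\Dec(\bY)$ is again an equivalence relation, and from this (together with $\pi_0(E)$ being invertible) one gets that $E$ is \emph{full} for free. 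Your route instead requires ``final $\Rightarrow$ full'' as an input, which the paper only obtains afterwards (in the Remark) by citing \cite{Ci17}; the stabilising-object strategy sidesteps this entirely.

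The specific gap is in how you obtain the split epimorphisms needed for Lemma~\ref{lemma:pi0.pb}. Pulling the pullback square back along $\epsilon(\bZ)$ does \emph{not} make the vertical arrows split: the verticals are (pullbacks of) $G$ and $\overline G$, which remain merely regular epimorphisms. The paper produces split epimorphisms in a completely different way: after applying $\Supp$ to the pullback square (legitimate because $E$ is full, by Lemma~\ref{lemma:supp.full}), it takes the \emph{kernel pairs} $\bR,\bS$ of the vertical arrows; the kernel-pair projections are split by the diagonal, and it is to \emph{those} squares that Lemma~\ref{lemma:pi0.pb} is applied. One then uses Proposition~\ref{prop:goursat} to know that the $Q$-images of the two columns are exact forks, observes that $\pi_0(\overline E)$ and hence $v$ are monomorphisms (because $\Supp(\overline E)$ is fully faithful), combines mono with the regular-epi comparison from Lemma~\ref{lemma:pi0.pb} to conclude the upper squares are pullbacks, and finally descends to get that $(\ast)$ is a pullback. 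Your ``descent along $\epsilon(\bZ)$'' plays no role in this argument.

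Your converse step also has a slip: in the factorisation $F=me$, the morphism $m$ is not a pullback of $F$, so you cannot conclude $m\in\cE'$ from $F\in\cE'$ in the way you suggest. The paper avoids this direction altogether, since the stabilising-object criterion already yields the monotone-light factorisation without first identifying $\cE'$; the identification $\cE'=\{\text{final regular epis}\}$ is recorded only afterwards, with the nontrivial implication delegated to \cite{Ci17}.
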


\begin{proof}
We are going to prove this result by means of a relative version of Proposition 6.10 in \cite{CJKP} (see Observation 2.2 in \cite{Ch04}). In order to do this, we have to show that $\Gpd(\cC)$ has enough (relatively) stabilising objects, i.e.\ that for each \bY\ in $\Gpd(\cC)$ there exists an effective descent morphism $P\colon\bE\to\bY$ such that the $(\cE,\cM)$ factorization of any morphism with codomain \bE\ is (relatively) stable. In fact, for each \bY\ in $\Gpd(\cC)$, the needed stabilising object will be chosen to be $\Dec(\bY)$, together with the morphism $\epsilon(\bY)\colon\Dec(\bY)\to\bY$.

Let us consider a morphism $F\colon\bX\to\Dec(\bY)$ in $\Gpd(\cC)$, and let
\[
\xymatrix{
\bX \ar[r]^-{E} & \bW \ar[r]^-{M} & \Dec(\bY)
}
\]
be its $(\cE,\cM)$ factorization.
We have to show that any pullback of $E$ along a regular epimorphism is in the class \cE. Recall from Section \ref{sec:conn.comp} that $\Dec(\bY)$ is an internal equivalence relation, hence \bW\ is also a relation because $M$ is a discrete fibration. Using the fact that \bW\ is an equivalence relation and $\pi_0(E)$ is an
isomorphism, it is easy to show that $E$ is full.

Let us now consider the following pullback in $\Gpd(\cC)$, where $P$ is a regular epimorphism:
\[
\xymatrix{
\bX\times_{\bW}\bZ \ar[r]^-{\overline{E}} \ar[d]_{\overline{P}} & \bZ \ar[d]^P \\
\bX \ar[r]_E & \bW
}
\]
By Lemma \ref{lemma:supp.full}, this pullback is preserved by $\Supp$. Then we can take the kernel pairs $\bR$ and $\bS$ of the corresponding vertical arrows in $\Eq(\cC)$ and get the discrete fibration in $\Eq(\cC)$, which is the upper part of the first of the diagrams
\[
\xymatrix@=7ex{
\bR \ar@<1ex>[d] \ar@<-1ex>[d] \ar[r] & \bS \ar@<1ex>[d] \ar@<-1ex>[d] \\
\Supp(\bX\times_{\bW}\bZ) \ar[r]^-{\Supp(\overline{E})} \ar[d]_{\Supp(\overline{P})} \ar[u] & \Supp(\bZ) \ar[d]^{\Supp(P)} \ar[u] \\
\Supp(\bX) \ar[r]_{\Supp(E)} & \Supp(\bW)
}
\qquad
\xymatrix@=7ex{
Q(\bR) \ar@<1ex>[d] \ar@<-1ex>[d] \ar[r]^v \ar@{}[dr]|{(\ast\ast)} & Q(\bS) \ar@<1ex>[d] \ar@<-1ex>[d] \\
\pi_0(\bX\times_\bW\bZ) \ar[r]^-{\pi_0(\overline{E})} \ar[d]_{\pi_0(\overline{P})} \ar@{}[dr]|{(\ast)} \ar[u] & \pi_0(\bZ) \ar[d]^{\pi_0(P)} \ar[u] \\
\pi_0(\bX) \ar[r]_{\pi_0(E)} & \pi_0(\bW)
}
\]
We apply now the functor $Q \colon \Eq(\cC) \rightarrow $ $\cC$ to that diagram on the left, to get the diagram on the right, where, by Proposition \ref{prop:goursat}, the two columns are exact forks. Moreover, $\pi_0(\overline{E})$ is monomorphic since $\Supp(\overline{E})$ is fully faithful (see Proposition 1.1 in \cite{B03}, for instance). Hence $v$ is also a monomorphism and by Lemma \ref{lemma:pi0.pb} the two downwarded commutative squares $(\ast\ast)$ are pullbacks. By elementary descent theory, $(\ast)$ is then a pullback. Then, since $\pi_0(E)$ is an isomorphism, so is also $\pi_0(\overline{E})$, thus proving that $\overline{E}$ is in \cE.
\end{proof}

\begin{Remark}
{\normalfont
Looking at the proof of Theorem \ref{thm:cf.ml}  more closely one can observe that, in fact, the key point is that the internal functor $E$ is full and $\pi_0(E)$ is an isomorphism, or equivalently $E$ is full and essentially surjective on objects, and both properties are preserved by taking a pullback along a regular epimorphism. By the characterisation of final functors given in \cite[Theorem 4.2]{Ci17}, this means that final functors are in the class $\cE'$ of morphisms stably in $\cE$ by taking pullbacks along regular epimorphisms. Conversely, by Lemma 4.1 in \cite{Ci17}, any morphism in $\cE'$ is in particular a final functor. Combining this result with Proposition \ref{prop:M*} we deduce that the relative monotone-light factorization system of Theorem \ref{thm:cf.ml} is exactly the comprehensive factorization system restricted to regular epimorphisms.
}
\end{Remark}


\end{document}